\documentclass[a4paper]{amsart}
\pdfoutput=1
\usepackage[utf8]{inputenc}  
\usepackage{currfile}
\usepackage{graphicx}
\usepackage{upref}
\usepackage{url}
\usepackage[pdftex, colorlinks=true]{hyperref}

\newcommand{\Dt}{{\Delta t}}
\newcommand{\abs}[1]{\left\vert#1\right\vert}
\newcommand{\R}{\mathbb R}
\newcommand{\N}{\mathbb N}
\newcommand{\Z}{\mathbb Z}
\newcommand{\norm}[1]{\left\Vert#1\right\Vert}
\newcommand{\seq}[1]{\left\{#1\right\}}
\newcommand{\Dp}{D_+}
\newcommand{\test}{\varphi}
\newcommand{\eps}{\varepsilon}
\DeclareMathOperator*{\sgn}{sign}
\newcommand{\sign}[1]{\sgn\left(#1\right)}
\newcommand{\dott}{\, \cdot\,}
\newcommand{\arxiv}[1]{\href{http://arxiv.org/pdf/#1}{arXiv:#1}} 

\newtheorem{theorem}{Theorem}[section]

\newtheorem{lemma}[theorem]{Lemma}

\numberwithin{equation}{section}     
\allowdisplaybreaks

\begin{document}
\title[FtL2LWR]{Follow-the-Leader models can be viewed as a numerical approximation to the 
Lighthill--Whitham--Richards model for traffic flow}
\author[Holden]{Helge Holden}
\address[Holden]{\newline
    Department of Mathematical Sciences,
    NTNU Norwegian University of Science and Technology,
    NO--7491 Trondheim, Norway}
\email[]{\href{helge.holden@ntnu.no}{helge.holden@ntnu.no}} 
\urladdr{\href{https://www.ntnu.edu/employees/holden}{https://www.ntnu.edu/employees/holden}}

\author[Risebro]{Nils Henrik Risebro}
\address[Risebro]{\newline
 Department of Mathematics,
University of Oslo,
  P.O.\ Box 1053, Blindern,
  NO--0316 Oslo, Norway }
\email[]{\href{nilshr@math.uio.no}{nilshr@math.uio.no}} 

\date{\today} 

\subjclass[2010]{Primary: 35L02; Secondary:  35Q35, 82B21}

\keywords{Follow-the-Leader model, Lighthill--Whitham--Richards model, traffic flow, continuum limit.}

\thanks{Research was supported by the grant {\it Waves and Nonlinear Phenomena (WaNP)} from the Research Council of Norway. The research was done while the authors were at Institut Mittag-Leffler, Stockholm.}

\begin{abstract} 
We show how to view the standard Follow-the-Leader (FtL) model as a numerical method to compute numerically the solution of the Lighthill--Whitham--Richards (LWR) model for traffic flow.
As a result we offer a simple proof that   FtL models converge to the LWR model for traffic flow
when traffic becomes dense. The proof is based on techniques used in the analysis of numerical schemes for  conservation laws, and 
the equivalence of weak entropy solutions of  conservation laws in the Lagrangian and Eulerian formulation. 
\end{abstract}

\maketitle
\section{Introduction} \label{sec:intro}
There are two paradigms in the mathematical modeling of traffic flow. One is based on an individual modeling of each vehicle with the dynamics governed by the distance between adjacent vehicles. The other is based on the assumption of dense traffic where the vehicles are represented by a density function, and individual vehicles cannot be identified. The dynamics is governed by a local velocity function depending solely on the density. The first model is denoted the Follow-the-Leader (FtL) model, and the second is called the Lighthill--Whitham--Richards (LWR) model  \cite{LW_II, richards} for traffic flow. Further refinements and extensions of these models are available. Intuitively, it is clear that the the FtL model should approach or approximate the LWR model in the case of heavy traffic, and that is what is proved here. This problem has been extensively studied, see \cite{Argall_etal,AwKlarMaterneRascle,ColomboRossi,CristianiSahu, 1605.05883,active,FrancescoRosini,GoatinRossi,1211.4619,HoldenRisebro_LWR,Rossi}.  Using numerical methods for scalar conservation laws we show that FtL models appear naturally as a numerical approximation of the LWR model. Thus we offer a short and direct proof that the FtL model converges to the LWR model, and our analysis is based on a careful study of the relationship between weak solutions in Lagrangian and Eulerian variables.

In the LWR model vehicles are described by a density $\rho=\rho(t,x)$ where $x$ is the position along the road, and $t$ as usual denotes time. Locally, one assumes that the velocity is given by a function $v$ that depends on the density only, that is, $v=v(\rho)$. If we consider unidirectional traffic on a homogenous road without exits or entries, conservation of vehicles requires that the  dynamics is governed by the scalar conservation law
   \begin{equation*}
  \rho_t+\big(\rho v(\rho)\big)_x=0,
  \end{equation*}
  which constitutes the LWR model. It is often denoted as ``traffic hydrodynamics'' due to its resemblance with fluid dynamics.
  
The FtL model can be described as follows. Consider $N$ vehicles with length $\ell$ and position
$z_1(t)<\dots<z_N(t)$ on the real axis with dynamics given by
\begin{align*}
  \dot z_i&=v\Bigl(\frac{\ell}{z_{i+1}-z_i}\Bigr) \ \text{ for $i=1,\dots, N-1$,} \\
  \dot{z}_N&=v_{\max}
\end{align*}
Here $v$ denotes a given velocity function with maximum $v_{\max}$, perhaps the speed
limit. Our proofs are considerably simpler when we have a
uniform bound on $z_{i+1}(t)-z_i(t)$. Having empty road ahead of the
first car would mean that ``$z_{N+1}-z_N=\infty$''. This is the same
as imposing $\dot{z}_N=v_{\max}$, and in this case $z_{i+1}(t)-z_i(t)$
would not be bounded by a constant independent of time. Therefore we 
will in this paper assume that we model one of two alternatives: \\
{\bf Periodic case:}  We are in the periodic case in which $z_i\in [a,b]$ for some
  interval $[a,b]$, and 
  \begin{equation*}
    \dot{z}_N(t) = v\left(\frac{\ell}{b-z_N(t)-a+z_1(t)}\right).
  \end{equation*}
{\bf Non-periodic case:} We imagine that there are infinitely many
  vehicles to the right of $z_N$, the distance between each of these vehicles
  is $M\ell$, for a finite, but arbitrary, constant $M>1$. In this case
  \begin{equation*}
    \dot{z}_N(t) = v\left(\frac{1}{M}\right).
  \end{equation*}

Introduce $y_i=(z_{i+1}-z_i)/\ell$ for $i=1,\dots, N-1$, to obtain
\begin{equation*}
  \dot y_i(t)=\frac1\ell\bigl(v(1/y_{i+1}(t))-v(1/y_i(t))\bigr).  
\end{equation*}
In this paper we analyze the limit of this system of ordinary differential equations when $N\to\infty$. There are two ways to proceed. 

We may analyze this system directly, in what we call the semi-discrete
case, see Section \ref{subsec:semidisc}.  
By using methods from the theory of numerical methods for scalar
conservation laws we show that  the sequence
$\seq{y_i(t)}_{i=1}^{N-1}$ converges, as $\ell\to 0$ and $N\to\infty$, to a function $y(t,x)$
that satisfies the equation 
\begin{equation}\label{eq:Lagrangey}
\begin{cases}
    y_t-V(y)_x=0 & t>0,\ x\in [0,1],\\
    y(0,x)=y_0(x) & x\in [0,1],\\
  \end{cases}
\end{equation}
where $V(y)=v(1/y)$, and
with boundary condition
\begin{equation*}
  \begin{cases}
    y(t,1)=y(t,0) \ &\text{ in the periodic case,}\\
    y(t,1)=M &\text{else.}
  \end{cases}
\end{equation*}
Note that $x$ is the Lagrangian mass coordinate, so that the integer part
of $x/\ell$ measures how many cars there are to the left of $x$.

Equation \eqref{eq:Lagrangey} is an example of a hyperbolic conservation law. It is well-known that solutions develop singularities, denoted shocks, in finite time independent of the smoothness of the initial data. Thus one needs to study weak solutions, and design so-called entropy conditions to identify the unique weak physical solution.  For a scalar conservation law $u_t+f(u)_x=0$ with initial data 
$u|_{t=0}=u_0$, the unique weak entropy solution $u=u(t,x)$, which is an integrable function of bounded variation, satisfies the Kru\v{z}kov entropy condition
\begin{equation}
\int\int_0^\infty \big( \abs{u-k}\phi_t+\sgn{(u-k)}(f(u)-f(k))\phi_x\big) \, dtdx+\int \abs{u_0-k}\phi|_{t=0}\, dx \ge 0 
\end{equation}
for all real constants $k\in\R$, and all non-negative test functions $\phi\in C^\infty_0(\R\times [0,\infty))$.  See \cite{HoldenRisebro}.

As an alternative approach, see Section \ref{subsec:eulerscheme}, we may discretize the time derivative by a small
positive $\Dt$ and write $z^n_j \approx z_j(n\Delta t)$,
$y_j^{n}\approx y_j(n\Delta t)$, we have that
\begin{equation*}
  z^{n+1}_j = z^n_j + \Dt V^n_j,\ \text{ and }\  
  y_j^{n+1}=y_j^{n}+\frac{\Dt}{\ell}\Bigl(
  V^n_{j+1}-V^n_j\Bigr), 
\end{equation*}
where $V^n_j=V(y^n_j)$.
The key observation is that this is an approximation of the hyperbolic conservation law $y_t-V(y)_x=0$ by a monotone scheme, and from the classical result of Crandall--Majda \cite{CranMaj:Monoton}, see also \cite[Thm. 3.9]{HoldenRisebro}, we know that this scheme converges, as $\ell\to0$, $N\to \infty$, and $\Dt\to 0$, to the entropy solution of equation \eqref{eq:Lagrangey}, namely $y_t-V(y)_x=0$. 
Thus in both cases we obtain convergence to the same hyperbolic conservation law in Lagrangian coordinates.

Next we have to transform the result of the two approaches, both in
Lagrangian coordinates, to Eulerian coordinates. For smooth solutions
this is nothing but a simple exercise in calculus, but for weak
entropy solutions this is a deep result due to Wagner
\cite{Wagner}. To be specific, we introduce the Eulerian space
coordinate $z=z(t,x)$, with $z_x=y$ and $z_t=V(y)$. 
A straightforward (but formal) calculation reveals that the Eulerian functions satisfy
\begin{equation*} \label{eq:E_L1} 
  y_t=-\frac1{\rho^2}\bigl(
  \rho_t+\rho_z v\bigr), \quad V(y)_x=\frac1{\rho} v'(\rho)\rho_z, 
\end{equation*}
and hence
\begin{equation*}
  \rho_t+\bigl(\rho v(\rho) \bigr)_z=0,
\end{equation*}
which is nothing but the LWR model. 
These formal transformations are not valid in general for weak entropy solutions. However, thanks to the fundamental result of Wagner \cite{Wagner}, weak entropy solutions in Lagrangian coordinates transform into weak entropy solutions in Eulerian variables.  
The approach here bears some resemblance to the approach in \cite{HoldenRisebro_LWR}, where the proof is obtained in a grid-less manner, 
and it does not depend on the use of  Crandall--Majda and Wagner. 

\section{The model}\label{sec:model}
Let us first introduce the FtL model.  Consider $N$ vehicles moving on
a one-dimensional road. Their position is given as a function of time
$t$ as $z_1(t), \dots, z_N(t)$. For the moment (we shall actually show
that this is so below) we assume that $z_1(t)<z_2(t)<\cdots <
z_N(t)$. We introduce the ``local inverse density'' by
\begin{equation*}
  y_i = \frac{1}{\ell}\big(z_{i+1}-z_i\big), \ \ i=1,\ldots,N-1,
\end{equation*}
where $\ell$ is the length of each vehicle. The velocity of the vehicle at
$z_i$ is
assumed to be a function of the distance to the vehicle in front, at
$z_{i+1}$. This means that 
\begin{equation} \label{eq:FLT}
  \dot z_i(t)=v\Bigl(\frac{\ell}{z_{i+1}(t)-z_i(t)}\Bigr), \quad i=1,\dots, N-1.  
\end{equation}
Regarding the first vehicle, located at $z_N$, we either assume that
there are infinitely many equally spaced vehicles in front of it, i.e., $y_N=M$, or that we are in the
periodic setting in an interval $[a,b]$, so that the distance from the
vehicle at $z_N$ to the vehicle at $z_1$ is $b-z_N + z_1 -a$, i.e.,
$y_N=(b-z_N+z_1-a)/\ell$. We have
\begin{equation}\label{eq:FLT_N}
  \dot{z}_N(t)=v\Bigl(\frac{1}{y_N(t)}\Bigr).
\end{equation}
Regarding the velocity function $v$, we assume it to be a 
decreasing Lipschitz continuous function such that
\begin{equation}
\text{$v(0)=v_{\max}$ and
$v(\rho)=0$ for $\rho\ge 1$.}\label{eq:Vass}
\end{equation}
The prototypical example is $v(\rho)=v_{\max}\max\seq{0,1-\rho}$. We
define the velocity in Lagrangian variables by $V(y)=v(1/y)$. 
Observe that $V$ is  globally bounded, Lipschitz continuous
and increasing for $y\ge 1$, with a bounded Lipschitz constant $L_v$. 

Rewriting \eqref{eq:FLT} in terms of $\seq{y_i}$ we get
\begin{equation}
  \label{eq:FLTy}
  \dot{y}_i = \frac{1}{\ell}
  \left(V({y_{i+1}})-V(y_{i})\right),
  \qquad i=1,\ldots,N-1,
\end{equation}
and
\begin{equation}
  \label{eq:FLTyN}
  y_N=
  \begin{cases}
    M, & \text{non-periodic case,}\\
    \frac{1}{\ell}(b-z_N + z_1 - a), & \text{periodic case.}
  \end{cases}
\end{equation}
Let us also define the \emph{Lagrangian grid} $\seq{x_{i-1/2}}_{i=1}^N$ by
$x_{i-1/2}=(i-1)\ell$. We shall also assume throughout that there is a
constant $1\le K < \infty$, $K$ independent of $N$ and $\ell$, such that
\begin{equation}
  \label{eq:initialass}
  1\le y_j(0)\le K,\qquad \sum_{j=1}^{N-1}\abs{y_{j+1}(0)-y_j(0)}\le K.
\end{equation}

\subsection{The semi-discrete case}\label{subsec:semidisc}
In this section we show that the solution of the system \eqref{eq:FLTy}  of ordinary differential equations
converges to an entropy solution of \eqref{eq:CLy} as
$\ell\to 0$, and that ``$1/y$'' converges to an entropy solution of
\eqref{eq:LWRz}.

Concretely, we define the piecewise constant function
\begin{equation}\label{eq:helerommet}
y_\ell(t,x)=y_j(t), \quad x\in (x_{j-1/2},x_{j+1/2}].
\end{equation} 
We shall also
use the notation
\begin{equation*}
  \Dp h_j = \frac{1}{\ell}\big(h_{j+1}-h_j\big)
\end{equation*}
for the forward difference. Let 
\begin{equation*}
  y^+ = \max\seq{y,0}\ \text{ and }\ y^-
  = - \min\seq{y,0},
\end{equation*}
and let $H$ denote the Heaviside function 
\begin{equation*}
  H(y)=
  \begin{cases}
    0 &y\le 0,\\ 1 &y>0.
  \end{cases}
\end{equation*}
\begin{lemma}
  Let $y_j(t)$ solve the system \eqref{eq:FLTy}. Then
  \begin{subequations}
    \begin{align}
      \label{eq:pluss}
      \frac{d}{dt} \left(y_j-k\right)^+ &\le
      \Dp\left[H(y_j-k)\left(V(y_j)-V(k)\right)\right], \\
      \frac{d}{dt} \left(y_j-k\right)^- &\le 
      \Dp\left[-H(k-y_j)\left(V(y_j)-V(k)\right)\right], \label{eq:minus}
    \end{align}
  \end{subequations}
  for any constant $k$.
\end{lemma}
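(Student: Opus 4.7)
The plan is to recognise the ODE \eqref{eq:FLTy} as $\dot y_j=\Dp V(y_j)$ and to read both desired inequalities as Kru\v{z}kov-style entropy estimates for this semi-discrete scheme, with the convex entropies $\eta^\pm(y)=(y-k)^\pm$ and their subgradients $H(y-k)$ and $-H(k-y)$ respectively. I will treat the two cases in parallel; the argument for $(\cdot)^+$ is described in detail and its mirror gives $(\cdot)^-$.

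First I will apply the chain rule for absolutely continuous functions to get, for a.e.\ $t$,
\begin{equation*}
  \frac{d}{dt}(y_j-k)^+ = H(y_j-k)\,\dot y_j = H(y_j-k)\,\Dp V(y_j).
\end{equation*}
The only delicate point is the ambiguity of $H$ at $0$, but on the set $\{t:y_j(t)=k\}$ the absolutely continuous function $y_j(t)-k$ vanishes, hence so does $\dot y_j$ a.e.\ on that set, and the identity holds a.e.\ irrespective of the convention. (If one prefers, $(\cdot-k)^+$ can first be replaced by a smooth convex approximation and the limit taken at the end.)

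Next I will compute the discrepancy between the right-hand side of \eqref{eq:pluss} and the expression just obtained. Expanding $\Dp$ and letting the $V(y_j)$-terms telescope gives
\begin{equation*}
  \Dp\bigl[H(y_j-k)(V(y_j)-V(k))\bigr] - H(y_j-k)\,\Dp V(y_j)
  = \tfrac{1}{\ell}\bigl[H(y_{j+1}-k)-H(y_j-k)\bigr]\bigl(V(y_{j+1})-V(k)\bigr).
\end{equation*}
Since $V$ is nondecreasing, $V(y_{j+1})-V(k)$ has the same (weak) sign as $y_{j+1}-k$, which is recorded by $H(y_{j+1}-k)$. A two-case check ($y_{j+1}>k$ versus $y_{j+1}\le k$) then shows that the bracket and the factor $V(y_{j+1})-V(k)$ always share a sign, so the remainder is $\ge 0$ and \eqref{eq:pluss} follows. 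For \eqref{eq:minus} the same manipulation, with $-H(k-y_j)$ in place of $H(y_j-k)$, produces the remainder $\tfrac{1}{\ell}[H(k-y_j)-H(k-y_{j+1})](V(y_{j+1})-V(k))$, which is nonnegative by the symmetric case split.

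The only genuine obstacle is Step~1, i.e., justifying the chain rule at $y_j=k$; everything afterwards is algebra driven by the monotonicity of $V$. This monotonicity is precisely what makes $y\mapsto \Dp V(y)$ an upwind, monotone discretisation of $-V(y)_x$, so the lemma is the expected discrete analogue of the Kru\v{z}kov entropy inequality for \eqref{eq:Lagrangey}, and it is exactly the input needed to pass to the limit $\ell\to 0$ in Section~\ref{subsec:semidisc}.
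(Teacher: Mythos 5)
Your proof is correct and follows essentially the same route as the paper's: differentiate $(y_j-k)^{\pm}$ via the chain rule, add and subtract $V(k)$ so that the result becomes the desired $\Dp$ expression minus the remainder $\frac{1}{\ell}\left(H(y_{j+1}-k)-H(y_j-k)\right)\left(V(y_{j+1})-V(k)\right)$, and invoke the monotonicity of $V$ to conclude that this remainder is nonnegative. Your additional care about the ambiguity of $H$ at $y_j=k$ addresses a point the paper passes over silently, but it does not change the substance of the argument.
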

\begin{proof}
 Throughout we use the notation $V_j=V(y_j)$.
  We have that
  \begin{align*}
    \frac{d}{dt} \left(y_j-k\right)^+ &=
    \frac{1}{\ell} H(y_j - k)\left(V_{j+1}-V_j\right)\\
    &=\frac{1}{\ell} \left[H(y_{j+1}-k)(V_{j+1}-V(k)) -
      H(y_j-k)(V_j-V(k))\right] \\
    &\quad - \frac{1}{\ell}\left(H(y_{j+1}-k)-H(y_j-k)\right)
    \left(V_{j+1}-V(k)\right)\\
    &=\Dp\left[H(y_j-k)(V_j-V(k))\right]\\
    &\quad - \frac{1}{\ell}\left(H(y_{j+1}-k)-H(y_j-k)\right)
    \left(V_{j+1}-V(k)\right).
  \end{align*}
  Now
  \begin{align*}
    (H(y_{j+1}-k)&-H(y_j-k))
    \left(V_{j+1}-V(k)\right)\\
    & =
    \begin{cases}
      0 & \text{$y_j$, $y_{j+1}\ge k$ or $y_j$, $y_{j+1}\le k$,}\\
      V_{j+1}-V(k)  & y_j<k<y_{j+1},\\
      V(k)-V_{j+1} & y_{j+1}<k<y_j,
    \end{cases}\\
    &\ge 0,
  \end{align*}
  since $y\mapsto V(y)$ is increasing. This proves \eqref{eq:pluss}; estimate 
  \eqref{eq:minus} is proved similarly.
\end{proof}
Now define $y_j(t)=y_1(t)$ for $j<1$ and $y_j(t)=y_{N-1}(t)$ for
$j>N-1$ in the non-periodic case. In the periodic case we define
$y_j(t)$ by periodic extension. To save space, we also use the
convention that in the non-periodic case, sums over $j$ range over all
$j\in \Z$, while in the periodic case, sums range over
$j=1,\ldots,N-1$.
\begin{lemma}
  \label{lem:bnd}
  If $1\le y_j(0)\le K$, then $1\le y_j(t)\le K$ for $t>0$.
\end{lemma}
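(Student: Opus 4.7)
The strategy is to apply the two differential inequalities \eqref{eq:pluss} and \eqref{eq:minus} from the preceding lemma with the specific choices $k=K$ and $k=1$, and conclude that the nonnegative quantities $(y_j-K)^+$ and $(1-y_j)^+$ each remain equal to their initial value, namely zero.

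For the upper bound $y_j\le K$, I would take $k=K$ in \eqref{eq:pluss} and sum over $j$. Setting $G_j=H(y_j-K)(V(y_j)-V(K))$, one sees that $G_j\ge 0$ because $V$ is increasing (so $H(y_j-K)$ and $V(y_j)-V(K)$ always share a sign). In the periodic case the sum $\sum_j \Dp G_j$ telescopes cyclically to zero. In the non-periodic case the telescoping leaves only the boundary contribution $(G_N-G_1)/\ell$, which is non-positive: the hypothesis $M=y_N\le K$ forces $G_N=0$ (one of the two factors vanishes), and $-G_1\le 0$ since $G_1\ge 0$. Consequently $\sum_j (y_j-K)^+$ is non-increasing, vanishes at $t=0$ by \eqref{eq:initialass}, and hence is identically zero for all $t>0$.

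For the lower bound $y_j\ge 1$, the argument is genuinely pointwise and requires no summation at all. Taking $k=1$ in \eqref{eq:minus}, the key observation is that assumption \eqref{eq:Vass} gives $V(y)=v(1/y)=0$ for every $y\le 1$ (since $1/y\ge 1$ triggers $v\equiv 0$). Hence the expression $-H(1-y_j)(V(y_j)-V(1))=-H(1-y_j)V(y_j)$ is identically zero in $j$: either $H(1-y_j)=0$, or $y_j\le 1$ and so $V(y_j)=0$. Thus the right-hand side of \eqref{eq:minus} equals $\Dp 0=0$ pointwise, $(1-y_j)^+$ is non-increasing, and the initial hypothesis $y_j(0)\ge 1$ immediately yields $y_j(t)\ge 1$.

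The only subtlety worth flagging is the bookkeeping for the non-periodic boundary contribution in the upper-bound estimate: one must carry along the compatibility $M\le K$ (already implicit in \eqref{eq:initialass} read at the boundary) so that $G_N$ has the right sign. Everything else follows from monotonicity of $V$ together with the structural fact that $V$ vanishes on $[0,1]$, which is precisely what makes the lower bound propagate pointwise rather than only in an $L^1$ sense.
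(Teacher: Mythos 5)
Your argument is correct, and it splits into two halves of different character relative to the paper. For the upper bound you do exactly what the paper does: sum \eqref{eq:pluss} over $j$, let the right-hand side telescope, and conclude that the nonnegative quantity $\sum_j(y_j-k)^+$ is non-increasing and starts at zero (the paper takes $k>K$ arbitrary rather than $k=K$, an immaterial difference). For the lower bound, however, you depart from the paper: instead of running the symmetric summation argument with \eqref{eq:minus} and $k<1$, you observe that \eqref{eq:Vass} forces $V(y)=v(1/y)=0$ for all $y\le 1$, so that $H(1-y_j)\bigl(V(y_j)-V(1)\bigr)$ vanishes identically and \eqref{eq:minus} with $k=1$ reads $\frac{d}{dt}(y_j-1)^-\le \Dp 0=0$ \emph{pointwise in $j$}. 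This is a genuinely different and arguably cleaner route: it needs no summation, hence no discussion of boundary terms, and it shows that $y_j\ge 1$ is preserved cell by cell because $y=1$ is a degenerate value of the flux; the price is that it uses the specific structure of $v$ (vanishing on $[1,\infty)$), whereas the paper's summation argument treats both bounds uniformly and would survive without that degeneracy. Your flag about the non-periodic boundary term in the upper bound is also well taken and in fact sharper than the paper's treatment: one does need $G_N=H(y_N-K)(V(y_N)-V(K))=0$, i.e.\ effectively $M\le K$ (otherwise $y_{N-1}$ relaxes toward $M$ and the stated bound genuinely fails); the paper sidesteps this by its convention of extending $y_j$ by the constant $y_{N-1}$ for $j>N-1$, which silently replaces the true boundary datum. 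Enlarging $K$ to $\max\{K,M\}$, as you suggest, resolves the point.
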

\begin{proof}
  From \eqref{eq:pluss} and \eqref{eq:minus} we have
  \begin{equation*}
    \frac{d}{dt}\sum_j \left(y_j(t)-k\right)^{\pm} \le 0.
  \end{equation*}
  Thus if $y_j(0)\le K$ for all $j$, then $y_j(t)<k$ for any constant
  $k>K$. Similarly $y_j(t)>k$ for any constant $k<1$ if $y_j(0)\ge 1$
  for all $j$. 
\end{proof}
\begin{lemma}\label{lem:l1cont}
  If $\seq{\tilde{y}_j(t)}_{j=1}^{N-1}$ is another solution of
  \eqref{eq:FLTy} and  \eqref{eq:FLTyN} with initial data $\tilde{y}_j(0)$, then
  \begin{equation}
    \label{eq:l1cont}
    \sum_j \abs{y_j(T)-\tilde{y}_j(T)}\le \sum_j \abs{y_j(0)-\tilde{y}_j(0)},
  \end{equation}
  for $T>0$.
\end{lemma}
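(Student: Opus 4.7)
The plan is to rerun the monotonicity argument of the previous lemma, but with the constant $k$ replaced by the second solution $\tilde{y}_j(t)$. Set $d_j(t) := y_j(t) - \tilde{y}_j(t)$ and $W_j(t) := V(y_j(t)) - V(\tilde{y}_j(t))$, so that $\dot d_j = \Dp W_j$. Since $V$ is increasing, $W_j$ and $d_j$ have the same sign; in particular $H(d_j) W_j = (W_j)^+$, while $H(d_j) W_{j+1} \le (W_{j+1})^+$ trivially. The same algebraic rearrangement that produced \eqref{eq:pluss} then yields
\begin{equation*}
  \frac{d}{dt}(y_j - \tilde y_j)^+ \le \Dp (W_j)^+,
  \qquad
  \frac{d}{dt}(y_j - \tilde y_j)^- \le \Dp (W_j)^-,
\end{equation*}
where the nonpositive remainder $-\frac{1}{\ell}(H(d_{j+1}) - H(d_j)) W_{j+1}$ is controlled by exactly the same four-case analysis as in the previous lemma; the monotonicity of $V$ is the only property used, not that the second function is constant.

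Adding the two inequalities gives $\frac{d}{dt}\abs{y_j - \tilde{y}_j} \le \Dp \abs{W_j}$ almost everywhere in $t$. Summing over $j$ with the paper's conventions: in the periodic case the sum $\sum_j \Dp\abs{W_j}$ telescopes to zero. In the non-periodic case the only surviving boundary term comes from the index $j = N-1$, and because \eqref{eq:FLTyN} forces $y_N = \tilde y_N = M$ we have $W_N = 0$; the boundary contribution is thus $-\abs{W_1}/\ell \le 0$. In either case $t \mapsto \sum_j \abs{y_j(t) - \tilde{y}_j(t)}$ is non-increasing, and integrating from $0$ to $T$ yields \eqref{eq:l1cont}.

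The one point that needs care is the non-differentiability of $(\cdot)^{\pm}$ when $d_j$ crosses zero. Since $V$ is bounded, each $y_j$ and $\tilde{y}_j$ is Lipschitz in $t$, hence so is $d_j$; by Stampacchia's lemma $(d_j)^{\pm}$ is differentiable almost everywhere with $\frac{d}{dt}(d_j)^{+} = H(d_j)\dot d_j$ (and analogously for the negative part), which is enough to justify the pointwise inequalities in the a.e.\ sense and then integrate in time. I expect this regularity bookkeeping, together with confirming that the boundary-term analysis plays out correctly under both conventions, to be the main (but entirely standard) obstacles.
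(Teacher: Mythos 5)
Your proof is correct, but it takes a genuinely different (and more elementary) route than the paper. The paper first establishes the discrete Kru\v{z}kov inequality \eqref{eq:discentropy} for a \emph{constant} $k$, and then removes the constancy restriction by doubling the time variable: it sets $k=\tilde y_j(\tau)$, $k=y_j(t)$ in the two inequalities, sums, tests against $\test(t,\tau)=\psi(\tfrac{t+\tau}{2})\omega_\eps(t-\tau)$, and sends $\eps\to0$ --- the full Kru\v{z}kov machinery transplanted to the semi-discrete setting. You instead observe that $d_j=y_j-\tilde y_j$ itself satisfies $\dot d_j=\Dp W_j$ with $W_j=V(y_j)-V(\tilde y_j)$, and that monotonicity of $V$ gives $\sgn W_j=\sgn d_j$, so the Crandall--Tartar-style computation applies directly at a single time; your identity $H(d_j)W_j=(W_j)^+$ together with the trivial bound $H(d_j)W_{j+1}\le (W_{j+1})^+$ even dispenses with the four-case analysis of the remainder. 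The a.e.\ chain rule for $(\cdot)^\pm$ of Lipschitz functions, which you invoke, is exactly the regularity input that makes this legitimate here and is what the doubling trick is designed to circumvent in the genuine PDE setting where solutions are not pointwise differentiable in time. Your boundary accounting is also right: $W_N=0$ in the non-periodic case (both solutions share $y_N=\tilde y_N=M$) and $W_N=W_1$ in the periodic case, so the telescoped sum is nonpositive either way. What the paper's longer route buys is that it reuses the constant-$k$ lemma verbatim and rehearses the argument one needs at the continuum level; what yours buys is brevity and the avoidance of the mollifier limit altogether.
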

\begin{proof}
  Adding \eqref{eq:pluss} and \eqref{eq:minus}, and observing that 
  \begin{equation*}
    (y-k)^++(y-k)^-=\abs{y-k}\ \text{ and }\ 
    H(y-k)-H(k-y) = \sign{y-k},
  \end{equation*}
  we find that
  \begin{equation}\label{eq:discentropy}
    \frac{d}{dt}\abs{y_j-k} \le \Dp\left[\sign{y_j-k}\left(V(y_j)-V(k)\right)\right].
  \end{equation}
  Set
  $q(y,k)=\sign{y-k}(V(y)-V(k))$. Choosing $k=\tilde{y}_j(\tau)$ in
  the inequality for $y_j(t)$ and $k=y_j(t)$ in the inequality for
  $\tilde{y}_j(\tau)$, and adding the two inequalities, give
  \begin{equation*}
    \left(\frac{d}{dt}+\frac{d}{d\tau}\right)
    \abs{y_j(t)-\tilde{y}_j(\tau)} \le \left(D_{+,1}+D_{+,2}\right)
    q\left(y_j(t),\tilde{y}_j(\tau)\right), 
  \end{equation*}
  where $D_{+,1(2)}$ denotes the difference with respect to the first
  (second) argument. Summing over $j$, multiplying with a non-negative
  test
  function $\test(t,\tau)$, where $\test\in C^\infty_0((0,\infty)^2)$,
  and integrating by parts yield
  \begin{equation*}
    \int_0^\infty\int_0^\infty \left(\test_t+\test_\tau\right) \sum_j
    \abs{y_j(t)-\tilde{y}_j(\tau)}\,d\tau dt \ge 0,
  \end{equation*}
  since taking the sum makes the right-hand side ``telescope''. Now we
  can use Kru\v{z}kov's trick, see \cite[Sec.~2.4]{HoldenRisebro}, and choose 
  \begin{equation*}
    \test(t,\tau)=\psi\left(\frac{t+\tau}{2}\right)\omega_\eps(t-\tau),
  \end{equation*}
  where $\psi\in C^\infty_0((0,\infty))$, $\psi\ge 0$ and
  $\omega_\eps$ is a standard mollifier, to obtain, as $\eps\to0$, that 
  \begin{equation*}
    \int_0^\infty \psi'(t)\sum_j \abs{y_j(t)-\tilde{y}_j(t)}\,dt \ge 0.
  \end{equation*}
  Choose $\psi$ to be a smooth approximation to the
  characteristic function of the interval $(t_1,t_2)\subset (0,T)$,
  to get
  \begin{equation*}
    \sum_j \abs{y_j(t_2)-\tilde{y}_j(t_2)}\le \sum_j \abs{y_j(t_1)-\tilde{y}_j(t_1)}.
  \end{equation*}
  The lemma follows by letting $t_1\downarrow 0$ and $t_2\uparrow T$. For details, see \cite[Sec.~2.4]{HoldenRisebro}.
\end{proof}
\begin{lemma}
  \label{lem:compact} 
  Assume that $1\le y_j(0)\le K$ and that
  $\sum_j\abs{y_{j+1}(0)-y_j(0)}\le K$ for some constant $K$
  independent of $\ell$. Then there is a sequence $\seq{\ell_i}$,
  where $\ell_i \to 0$ as $i\to \infty$, and there exists a function $y\in C([0,T];L^1([0,1])$ such that $y_{\ell_i}$ converges to $y$
  in $C([0,T];L^1([0,1])$.
\end{lemma}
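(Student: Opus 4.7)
The plan is to apply a standard Helly/Kolmogorov-type compactness argument, for which we need three uniform bounds on the family $\{y_\ell\}$: a uniform $L^\infty$ bound, a uniform spatial total-variation bound, and uniform equicontinuity in time with values in $L^1([0,1])$. The first of these is given for free by Lemma~\ref{lem:bnd}, so the real work is to produce the spatial $\mathrm{BV}$ estimate and the time equicontinuity from the tools already developed.

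First I would obtain the uniform spatial $\mathrm{BV}$ bound by a shift trick. Set $\tilde{y}_j(t)=y_{j+1}(t)$; this is again a solution of \eqref{eq:FLTy}--\eqref{eq:FLTyN} (using the periodic extension or the constant extension stipulated above the lemma), so Lemma~\ref{lem:l1cont} yields
\begin{equation*}
\sum_j \abs{y_{j+1}(t)-y_j(t)} \le \sum_j \abs{y_{j+1}(0)-y_j(0)} \le K
\end{equation*}
for every $t\ge 0$. Translated to the piecewise constant interpolant $y_\ell(t,\cdot)$ defined in \eqref{eq:helerommet}, this gives $\mathrm{TV}(y_\ell(t,\cdot))\le K$ uniformly in $t$ and $\ell$.

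Second I would establish time equicontinuity in $L^1([0,1])$. Here the naive estimate $|\dot y_j|\le L_v\ell^{-1}|y_{j+1}-y_j|$ has a dangerous $1/\ell$ factor, and this is the main technical obstacle. However, in the $L^1$-norm of the piecewise constant function $y_\ell$ on $[0,1]$, each term $|\dot y_j|$ is weighted by $\ell$, so the $1/\ell$ factor cancels. Concretely, for $0\le s<t\le T$,
\begin{align*}
\int_0^1 \abs{y_\ell(t,x)-y_\ell(s,x)}\,dx
&= \sum_j \ell\,\abs{y_j(t)-y_j(s)} \\
&\le \int_s^t \sum_j \ell\,\abs{\dot y_j(\sigma)}\,d\sigma
= \int_s^t \sum_j \abs{V(y_{j+1})-V(y_j)}\,d\sigma \\
&\le L_v \int_s^t \sum_j \abs{y_{j+1}(\sigma)-y_j(\sigma)}\,d\sigma
\le L_v K (t-s),
\end{align*}
using the Lipschitz constant $L_v$ of $V$ and the spatial $\mathrm{BV}$ bound from the previous step. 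Thus the family $\{y_\ell\}$ is uniformly Lipschitz continuous from $[0,T]$ into $L^1([0,1])$.

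Finally I would conclude by a standard diagonal argument. For each fixed $t$, the uniform $L^\infty$ and spatial $\mathrm{BV}$ bounds, together with the compact embedding $\mathrm{BV}([0,1])\hookrightarrow L^1([0,1])$ (Helly's theorem), give a subsequence of $\{y_\ell(t,\cdot)\}$ converging in $L^1([0,1])$. Applying this at each $t$ in a countable dense set of $[0,T]$ and extracting a diagonal subsequence $\{\ell_i\}$, I obtain convergence at every rational time. The uniform $L^1$-Lipschitz continuity in time then promotes this to convergence in $L^1([0,1])$ at every $t\in[0,T]$, uniformly in $t$, i.e.\ convergence of $y_{\ell_i}$ in $C([0,T];L^1([0,1]))$ to a limit $y$ which automatically inherits the $L^\infty$, $\mathrm{BV}$ and time-Lipschitz bounds.
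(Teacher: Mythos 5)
Your proposal is correct and follows essentially the same route as the paper: the uniform $L^\infty$ bound from Lemma~\ref{lem:bnd}, the spatial $BV$ bound via the shift $\tilde y_j=y_{j+1}$ in Lemma~\ref{lem:l1cont}, the $L^1$-in-time Lipschitz estimate from integrating \eqref{eq:FLTy} (where the weight $\ell$ cancels the $1/\ell$), and then the standard Helly/diagonal compactness argument, which is exactly what the cited Theorem~A.11 of \cite{HoldenRisebro} packages. The only cosmetic difference is that the paper first reduces $\norm{y_\ell(t,\dott)-y_\ell(t-\sigma,\dott)}_{L^1}$ to the interval $[0,\sigma]$ via the $L^1$-contraction before integrating the ODE, whereas you integrate directly on $[s,t]$ using the time-uniform $BV$ bound; the two are equivalent.
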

\begin{proof}
  Lemma~\ref{lem:bnd} shows that $\seq{y_\ell}_\ell$ is bounded
  independently of $\ell$;  choosing $\tilde{y}_j=y_{j+1}$ and using
  Lemma~\ref{lem:l1cont} yields the $BV$ bound on
  $\seq{y_\ell(t)}_\ell$ uniformly in $\ell$ and $t$. Choosing
  $\tilde{y}_j(t)=y_j(t-\sigma)$ in Lemma~\ref{lem:l1cont} for some $0<\sigma<t$ gives
  \begin{align*}
    \norm{y_\ell(t,\dott)-y_\ell(t-\sigma,\dott)}_{L^1}&=
    \ell\sum_j \abs{y_j(t)-y_j(t-\sigma)} \\
    &\le \ell \sum_j \abs{y_j(\sigma)-y_j(0)}\\
    &\le \sum_j \int_0^\sigma \abs{V(y_{j+1}(\xi))-V(y_j(\xi))}\,d\xi
    \\
    &\le \norm{V}_{\mathrm{Lip}} \sum_j \int_0^\sigma
    \abs{y_{j+1}(\xi)-y_{j}(\xi)}\,d\xi\\
    & \le \norm{V}_{\mathrm{Lip}} \sigma \sum_j \abs{y_{j+1}(0)-y_j(0)}\\
    &\le \norm{V}_{\mathrm{Lip}} \sigma K.
  \end{align*}
  Hence the map $t\mapsto y_\ell(t,\dott)$ is $L^1$ Lipschitz
  continuous, with a Lipschitz constant independent of $\ell$. Thus by
  \cite[Thm.~A.11]{HoldenRisebro}, the family
  $\seq{y_\ell}_{\ell>0}$ is compact in $C([0,\infty);L^1([0,1]))$.
\end{proof}
Furthermore we assume that as $N$ increases, the initial position of
the vehicles are such that there is a function $y_0(x)$ such that
\begin{equation}
  \lim_{\ell\to 0} y_\ell(0,\dott)=y_0(\dott), \label{eq:limitass}
\end{equation}
and that this convergence is in $L^1([0,1])$. We also assume that $\norm{y_0}_{L^\infty([0,1])}\le K$,
without loss of generality we can then also assume that
$\norm{y_\ell(0,\dott)}_{L^\infty([0,1])}\le K$.

It is now straightforward, starting from the discrete entropy
inequality \eqref{eq:discentropy}, to show that any limit of
$\seq{y_\ell}_{\ell>0}$ is the unique entropy solution to
\eqref{eq:CLy} by following a standard Lax--Wendroff argument, see \cite[Thm.~3.4]{HoldenRisebro}.   
Thus the whole sequence $\seq{y_\ell}$ converges, and
the unique entropy solution to \eqref{eq:Lagrangey} is the limit
\begin{equation*}
  y=\lim_{\ell\to 0} y_\ell.
\end{equation*}
Introduce the Eulerian spatial coordinate $z$, given by the equations
\begin{equation*}
  \frac{\partial z}{\partial x}=y,\qquad \frac{\partial z}{\partial t}=V(y),
\end{equation*}
and the variable $\rho=1/y$.
We can now proceed following the argument of Wagner~\cite{Wagner} to
obtain that $\rho$ is the unique weak entropy solution to the LWR model
\begin{equation}\label{eq:LWRz}
  \begin{cases}
    \rho_t+\bigl(\rho v(\rho) \bigr)_z=0, &t>0,\\
    \rho(0,z)=\rho_0(z).
  \end{cases}
\end{equation}

We can also study the convergence in Eulerian coordinates directly by defining a discrete version of the 
transformation from Lagrangian to Eulerian coordinates. 
To define the discrete version of $\rho$, we need the approximate
Eulerian coordinate; $z_\ell(t,x)$.  Define
\begin{equation*}
  z_\ell(t,x)=\frac{1}{\ell} \big(x_{j+1/2}-x\big)z_j(t)+
\frac{1}{\ell} \big(x-x_{j-1/2}\big)  z_{j+1}(t), \ \text{for $x\in [x_{j-1/2},x_{j+1/2}]$,}
\end{equation*}
where $\seq{z_j(t)}$ solves \eqref{eq:FLT}. Then
\begin{align*}
  \frac{\partial z_\ell}{\partial t}&=\frac{1}{\ell}\big(x_{j+1/2}-x\big) V_j+\frac{1}{\ell}\big(x-x_{j-1/2}\big)
  V_{j+1},\\
  \frac{\partial z_\ell}{\partial x}&= y_j,
\end{align*}
for $x\in (x_{j-1/2},x_{j+1/2})$. The sequence $\seq{z_\ell}_{\ell>0}$
is  uniformly Lipschitz continuous. Hence
by the Arzel{\`a}--Ascoli theorem, it converges uniformly to a Lipschitz continuous limit
$z(t,x)$ satisfying $z_t=V(y)$ and $z_x=y$ almost
everywhere. Furthermore the map $x\mapsto z_\ell(t,x)$ is invertible,
with inverse $x_\ell(t,z)$. In the periodic case we set 
\begin{subequations}
  \label{eq:zlrdef}
  \begin{equation}
    z_{l,\ell}(t)=a,\ \
    z_r(t)=b, \label{eq:zlrdefper}
\end{equation}
otherwise we define 
\begin{equation}
  z_{l,\ell}(t)=z_\ell(t,0),\ \ 
  z_r(t)=b+tV(M)=z(t,1)=z_\ell(t,1).\label{eq:zlrdefnonper}
\end{equation}

Observe that $z_{l,\ell}(t)=z_\ell(t,0)\to z(t,0)=z_l(t)$ as $\ell\to
0$.
\end{subequations}
Define 
\begin{equation}
  \label{eq:rhosemi}
  \rho_\ell(t,z)=\frac{1}{y_\ell(t,x_\ell(t,z))} \ \text{ for $z\in[z_{l,\ell}(t),z_r(t)]$.}
\end{equation}
In the periodic case, we define $\rho_\ell$ by periodic continuation,
while in the non-periodic case we define
\begin{equation*}
  \rho_\ell(t,z)=
  \begin{cases}
    0 &z<z_{l,\ell},\\
    1/M &z>z_r.
  \end{cases}
\end{equation*}
Next we claim that 
\begin{equation}\label{eq:rhoelllim}
  \rho_\ell(t,z)\to \rho(t,z)=\tilde\rho(t,x(t,z))
\end{equation}
in $L^1([z_l,z_r])$ as $\ell\to 0$. To see this, define
$\tilde\rho_\ell(t,x)=1/y_\ell(t,x)$, and compute
\begin{align*}
  \norm{\rho(t,\dott)-\rho_\ell(t,\dott)}_{L^1} &=
  \norm{\tilde\rho(t,x(t,\dott))-\tilde\rho_\ell(t,x_\ell(t,\dott))}_{L^1}\\
  &\le
  \underbrace{\norm{\tilde\rho(t,x(t,\dott))-\tilde\rho(t,x_\ell(t,\dott))}_{L^1}}_{A}\\
 &\quad +\underbrace{\norm{
      \tilde\rho(t,x_\ell(t,\dott))-\tilde\rho_\ell(t,x_\ell(t,\dott))}_{L^1}}_{B}.
\end{align*}
We have that
\begin{align*}
  A&=\int_{z_l}^{z_r}
  \abs{\tilde\rho(t,x(t,z))-\tilde\rho(t,x_\ell(t,z))}\,dz\\
  &\le \Bigl(\int_{\min\seq{z_l,z_{l,\ell}}}^{\max\seq{z_l,z_{l,\ell}}} + 
  \int_{\max\seq{z_l,z_{l,\ell}}}^{z_r}\Bigr) 
  \abs{\tilde\rho(t,x(t,z))-\tilde\rho(t,x_\ell(t,z))}\,dz.
\end{align*}
Since $\tilde\rho_\ell$ and $\tilde\rho$ are both bounded by $1$, and
$z_{l,\ell}\to z_l$ as $\ell\to 0$, the first of these integrals tend
to zero.
Since $x_\ell\to x$ uniformly, the integrand tends to zero almost
everywhere, and is bounded by $2$. Hence by the dominated convergence
theorem, the last integral tends to zero. The same argument applies to
$B$. Thus the claim \eqref{eq:rhoelllim} is justified.

Summing up, we have shown the following result.
\begin{theorem}
  \label{thm:semidiscrete}
   Assume that the function $v$ satisfies \eqref{eq:Vass}.
  Let $\seq{y_j}_{j=1}^{N-1}$ satisfy \eqref{eq:FLTy}, with either
  periodic boundary conditions; $y_N(t)=y_1(t)$, or $y_N(t)=M$ for
  some fixed constant $M>1$.  Assume that the initial positions
  of the vehicles $\seq{z_i(0)}_{i=1}^{N}$ are such the we can define a
  bounded function $y_0$ by \eqref{eq:limitass}, and that
   \eqref{eq:initialass} holds, namely that the initial data are bounded
  with finite total variation.  \\
 (i) The piecewise constant (in space) function $y_\ell(t,x)$ defined by \eqref{eq:helerommet} converges 
 in $C([0,T];L^1([0,1])$ as $\ell\to0$  to the unique weak entropy solution $y$ of \eqref{eq:Lagrangey}. 
 The function $\rho=1/y$ satisfies the LWR model \eqref{eq:LWRz} in Eulerian variables. \\
 (ii) The function
  $\rho_\ell$ defined by \eqref{eq:rhosemi} converges in $C([0,T];L^1([0,1])$ as $\ell\to0$  to the unique
 weak entropy solution $\rho$ of \eqref{eq:LWRz}. 
\end{theorem}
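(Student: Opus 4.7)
The plan is to gather the ingredients already prepared in Section~\ref{subsec:semidisc}: the $L^\infty$ bound (Lemma~\ref{lem:bnd}), the $L^1$ contraction (Lemma~\ref{lem:l1cont}), the compactness (Lemma~\ref{lem:compact}), the discrete entropy inequality \eqref{eq:discentropy}, and the Lagrangian-to-Eulerian convergence \eqref{eq:rhoelllim}, and stitch them together using a Lax--Wendroff consistency argument followed by Wagner's theorem.

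First, Lemma~\ref{lem:compact} extracts a subsequence $\ell_i\to 0$ along which $y_{\ell_i}\to y$ in $C([0,T];L^1([0,1]))$, with $y(0,\dott)=y_0$ by \eqref{eq:limitass}. To identify the limit, I would multiply \eqref{eq:discentropy} by a nonnegative test function $\test(t,x_{j-1/2})$ with $\test\in C^\infty_0([0,T)\times [0,1])$, integrate in $t$, sum in $j$, and apply summation by parts (Abel's identity) to transfer the forward difference $\Dp$ onto $\test$. The boundary terms are controlled by the periodic identification in the periodic case, and by the fixed state $y_N=M$ together with the support of $\test$ in the non-periodic case. Using the Lipschitz continuity of $V$, the $L^\infty$ and $BV$ bounds, and the $L^1$ convergence of $y_{\ell_i}$ to $y$, each Riemann sum converges to the corresponding integral, yielding the Kru\v{z}kov entropy inequality
\[
\int_0^\infty\!\!\int_0^1\big(\abs{y-k}\test_t+\sign{y-k}(V(y)-V(k))\test_x\big)\,dxdt+\int_0^1\abs{y_0-k}\test|_{t=0}\,dx\ge 0
\]
for every $k\in\R$. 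This makes $y$ the unique weak entropy solution of \eqref{eq:Lagrangey}, so the whole sequence $\seq{y_\ell}$ converges. This is the Lax--Wendroff argument of \cite[Thm.~3.4]{HoldenRisebro}.

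For the Eulerian statement in (i), I invoke Wagner's theorem \cite{Wagner}: since $y$ is a weak entropy solution of $y_t-V(y)_x=0$ with $y\ge 1$, the change of variables defined by $z_x=y$, $z_t=V(y)$ yields a Lipschitz homeomorphism $x\leftrightarrow z$ and the pushforward $\rho=1/y$ is the unique weak entropy solution of the LWR equation \eqref{eq:LWRz} with initial datum $\rho_0=1/y_0\circ x(0,\dott)$. This gives (i). For (ii), the construction of $z_\ell$, $x_\ell$, and $\rho_\ell$ together with the uniform Lipschitz continuity of $z_\ell$ and the Arzel\`a--Ascoli argument already in the text produce $z_\ell\to z$ uniformly and $x_\ell\to x$ uniformly, and the decomposition into the terms $A$ and $B$ just above shows $\rho_\ell(t,\dott)\to\rho(t,\dott)$ in $L^1$. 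The $C([0,T];L^1)$ continuity of this convergence is inherited from that of $y_\ell\to y$ via the same decomposition, giving (ii).

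The main obstacle I expect is the passage to the limit in the discrete entropy inequality; in particular, ensuring that the forward-difference structure produces the correct entropy flux after summation by parts, and that the boundary contributions at $x=0,1$ vanish appropriately in both the periodic and non-periodic cases. Once that step is clean, Kru\v{z}kov's $L^1$ uniqueness upgrades subsequential convergence to convergence of the whole family, and Wagner's theorem handles the transformation to Eulerian variables without any additional hard analysis on our part.
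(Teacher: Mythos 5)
Your proposal is correct and follows essentially the same route as the paper: the compactness lemma plus a Lax--Wendroff passage to the limit in the discrete entropy inequality \eqref{eq:discentropy} identifies $y$ as the unique entropy solution of \eqref{eq:Lagrangey} (whence the whole family converges), and Wagner's theorem together with the $A$/$B$ decomposition of $\norm{\rho(t,\dott)-\rho_\ell(t,\dott)}_{L^1}$ handles the Eulerian statements. No substantive differences from the argument assembled in Section~\ref{subsec:semidisc}.
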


\subsection{Analysis of the Euler scheme for \eqref{eq:FLT}}
\label{subsec:eulerscheme}
The simplest numerical method to approximate solutions of
\eqref{eq:FLT} is the forward Euler scheme, viz.,
\begin{equation}
  \label{eq:FLT_euler}
  z_i((n+1)\Dt)=z_i(n\Dt) + \Dt
  v\Bigl(\frac{\ell}{z_{i+1}(n\Dt)-z_i(n\Dt)}\Bigr),
\end{equation}
where $\Dt$ is a (small) positive number.

If we write the Euler scheme \eqref{eq:FLT_euler} in the $y$ variable,
we get
\begin{equation}
  \label{eq:FLTy_euler}
  y^{n+1}_i = y^n_i + \lambda
  \left(V^{n}_{i+1}-V^n_i\right),\qquad i=1,\ldots,N-1,
\end{equation}
where $y^n_i=y_i(t^n)$, $t^n=n\Dt$, $\lambda=\Dt/\ell$  and $V^n_i=V(y^n_i)$. As a (right)
boundary condition we use
\begin{equation}\label{eq:boundarycl}
  V^n_N =
  \begin{cases}
    V(M) & \text{non-periodic,}\\
    V^n_1   & \text{periodic.}
  \end{cases}
\end{equation}
%
For $t\ge 0$ and $x\in [0,(N-1)\ell]$ define the function
\begin{equation*}
  y_\ell(t,x)=y^n_i \quad (t,x)\in [t^n,t^{n+1})\times (x_{i-1/2},x_{i+1/2}].
\end{equation*}
Observe that we can rewrite \eqref{eq:FLTy_euler} as
\begin{equation*}
  y^{n+1}_i = \left(1-\lambda\theta^n_{i+1/2}\right) y^n_i + \lambda\theta^n_{i+1/2} y^n_{i+1},
\end{equation*}
where 
\begin{equation*}
  \theta^n_{i+1/2}=-\frac{V^n_{i+1}-V^n_i}{y^n_{i+1}-y^n_i} \ge 0,
\end{equation*}
and since $V$ is Lipschitz continuous, $\theta^n_{i+1/2}\le L_v$. Hence if the CFL-condition 
\begin{equation}
  \label{eq:CFL}
  \lambda L_v \le 1,
\end{equation}
holds, then $y^{n+1}_i$ is a convex combination of $y^n_i$ and $y^n_{i+1}$. Thus
the scheme \eqref{eq:FLTy_euler} is monotone. In passing, we note that
a consequence is that if $1\le y^0_i\le K$ for all $i$, then $1\le y^n_i\le K$
for all $i$.  Regarding the position of vehicles, this means that if
$z_{i}(0)\le z_{i+1}(0)-\ell$, then $z_{i}(t^n)\le z_{i+1}(t^n)-\ell$.
So from a road safety perspective, the model is rather optimistic.

We are now interested in taking the limit as $\ell\to 0$. We do this
by increasing the number of vehicles such that $(N-1)\ell=1$; furthermore
we assume that \eqref{eq:limitass} holds.  Now
the conditions are such that fundamental results  of Crandall and
Majda \cite{CranMaj:Monoton}, see also \cite[Thm. 3.9]{HoldenRisebro},
can be applied. Thus we know that there is a function $y\colon\R_0^+\times
[0,1] \to \R$, with $y\in C(\R^+;L^1([0,1]))$, such that 
\begin{equation*}
  y_\ell(t,x) \to y(t,x),
\end{equation*}
with the limit being in $C(\R^+;L^1([0,1]))$, 
and that $y$ is the unique entropy solution to the Cauchy problem
\begin{equation}
  \label{eq:CLy}
  \begin{cases}
    y_t - V(y)_x = 0, \qquad &t>0, x\in [0,1],\\
    y(0,x)=y_0(x).
  \end{cases}
\end{equation}
If we do not have periodic conditions, this is supplemented with the
boundary condition 
\begin{equation*}
  y(t,1) = M, \quad t>0.
\end{equation*}
We remark that since the characteristic speeds of \eqref{eq:CLy} are strictly
negative, this boundary condition can be enforced strictly. 

Note that the convergence of $y_\ell$ and the bounds $1\le y_\ell \le
M$, imply the convergence of $\tilde\rho_\ell = 1/y_\ell$ to some function
$\tilde\rho$. We now proceed to show how $\tilde\rho$ is related to
the solution of the LWR model.

We also define the discrete ``Lagrange to Euler'' map $\tilde{z}_\ell$ as
follows. Let 
\begin{equation*}
  \tilde{z}^n_{i+1/2}=\tilde{z}^n_{i-1/2} + \ell y^n_i, \quad
  \text{i.e.,}\ \tilde{z}^n_{i+1/2}=z^n_{i+1}.
\end{equation*}
Since $z^n_i$ solves \eqref{eq:FLT_euler}, we also have that
\begin{equation*}
  \tilde{z}^{n+1}_{i+1/2}=\tilde{z}^n_{i+1/2} + \Dt v^n_{i+1}.
\end{equation*}
Define $\tilde{z}_\ell(t^n,x_{i+1/2})=\tilde{z}^n_{i+1/2}$, and by bilinear
interpolation between these points. For later use we employ the
notation for the value of $\tilde{z}_\ell$ at the edges of the
``\emph{Lagrangian grid}'',
\begin{align*}
  \tilde{z}_{i+1/2}(t)&=\frac{1}{\Dt}\left((t^{n+1}-t)\tilde{z}^n_{i+1/2} +
    (t-t^n)\tilde{z}^{n+1}_{i+1/2}\right),\ \text{for $t\in [t^n,t^{n+1}]$},\\
  \tilde{z}^{n}(x)&=\frac{1}{\ell}\left((x_{i+1/2}-x) \tilde{z}^{n}_{i-1/2}
  + (x-x_{i-1/2})\tilde{z}^n_{i+1/2}\right), \ \text{for $x\in [x_{i-1/2},x_{i+1/2}]$.} 
\end{align*}
Observe that $\tilde{z}_{i-1/2}(t)$ coincides with the approximate trajectory
of the vehicle starting at $z_i(0)$ calculated by the Euler method \eqref{eq:FLT_euler}.
Since $y_\ell$ is bounded, we can
invoke the Arzel{\`a}--Ascoli theorem to establish the convergence 
\begin{equation*}
  \lim_{\ell\to 0} \tilde{z}_\ell(t,x)=z(t,x),
\end{equation*}
with the limit being in $C([0,T]\times[0,1])$ and $z\in\ C([0,T]\times[0,1])$, 
and that 
\begin{equation*}
  \frac{\partial z}{\partial x}=y, \qquad \frac{\partial z}{\partial t}=V(y),
\end{equation*}
weakly. We have that the map $x \mapsto \tilde{z}_\ell(t,x)$ is invertible for each $t$, we
denote the inverse map by $x_\ell$, so that
$x_\ell(t,z_\ell(t,x))=x$.
Define $z_{l,\ell}$ and $z_r$ as in \eqref{eq:zlrdef} and $\rho_\ell$
as in \eqref{eq:rhosemi}. 

Note that if $z\in
(z_{i-1/2}(t),z_{i+1/2}(t)]$ and $t\in [t^n,t^{n+1})$, then
\begin{equation*}
  x_\ell(t,z)\in (x_{i-1/2},x_{i+1/2}], \quad
  \rho_\ell(t,z)=\tilde\rho^n_i:=\frac{1}{y^n_i}.
\end{equation*}
As before we have that
\begin{equation*}
  \rho_\ell(t,z)\to \rho(t,z)=\tilde\rho(t,x(t,z))
\end{equation*}
in $L^1([z_l,z_r])$ as $\ell\to 0$. 

By Wagner's result \cite{Wagner}, we have proved the following theorem. 
\begin{theorem}
  \label{thm:fullydiscrete}
  Assume that the function $v$ satisfies \eqref{eq:Vass}.
  Let $\ell>0$ and $N\in\N$, let $\seq{z_j}_{j=1}^N$ satisfy
  \eqref{eq:FLT_euler}, and assume that either we are in the periodic case
  $z_j\in [0,1]$, or that $z_N$ satisfies the boundary condition
  \eqref{eq:FLT_N}, with $y_N=M$. Assume that the initial positions
  of the vehicles $\seq{z_i(0)}_{i=1}^{N}$ are such the we can define a
  bounded function $y_0$ by \eqref{eq:limitass}, and that
  \eqref{eq:initialass} holds.

  Define the function $\rho_\ell(t,z)$ by \eqref{eq:rhosemi}.
  Let $N$ and $\ell$ satisfy $(N-1)\ell=1$ and assume that the
  CFL-condition \eqref{eq:CFL} holds.

  As $\ell\to 0$, $\rho_\ell$
  converges in $C([0,\infty);L^1)$ to the
  unique entropy solution $\rho$ of the conservation law \eqref{eq:LWRz}.
\end{theorem}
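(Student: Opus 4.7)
The plan is to mirror the semi-discrete argument of Theorem~\ref{thm:semidiscrete}, substituting the Crandall--Majda convergence theory for monotone finite-difference schemes in place of the ODE analysis. The setup in Section~\ref{subsec:eulerscheme} already supplies the three ingredients needed: the scheme \eqref{eq:FLTy_euler} can be written as a convex combination under the CFL condition \eqref{eq:CFL}, it preserves the bound $1\le y_i^n\le K$, and it is consistent with the flux $-V$. I would begin by noting that these properties, together with a uniform bounded variation estimate obtained by inserting the shifted sequence $\tilde y_j^n=y_{j+1}^n$ into a discrete $L^1$ contraction of the type proved in Lemma~\ref{lem:l1cont}, put us in the setting of \cite{CranMaj:Monoton}. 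Hence $y_\ell\to y$ in $C([0,T];L^1([0,1]))$, where $y$ is the unique Kru\v{z}kov entropy solution of \eqref{eq:CLy} with the appropriate boundary data, and $\tilde\rho_\ell:=1/y_\ell\to 1/y=:\tilde\rho$ in the same norm, since both are bounded away from zero.

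Next I would establish convergence of the discrete Lagrange-to-Euler map $\tilde z_\ell$. By construction $\tilde z_\ell$ is Lipschitz in $x$ with constant $K$ (because $\partial_x\tilde z_\ell=y_\ell$ almost everywhere), and Lipschitz in $t$ with constant $\norm{V}_{L^\infty}$. Arzel\`a--Ascoli then yields a subsequential uniform limit $z\in C([0,T]\times[0,1])$, and the identities $z_x=y$, $z_t=V(y)$ pass to the limit weakly because $y_\ell\to y$ and $V(y_\ell)\to V(y)$ in $L^1$. Uniqueness of $y$ forces the whole family to converge. Since $y\ge 1>0$, the map $x\mapsto z(t,x)$ is strictly increasing with a Lipschitz inverse $x(t,z)$, and the uniform convergence $\tilde z_\ell\to z$ translates into uniform convergence $x_\ell\to x$ on compact subsets, with the endpoints $z_{l,\ell}\to z_l$ and $z_r$ fixed handled by \eqref{eq:zlrdef}.

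With these two convergences in hand, I would push $\rho_\ell\to \rho$ in $L^1$ by the very same two-term decomposition used in Section~\ref{subsec:semidisc}:
\begin{equation*}
\rho_\ell(t,\dott)-\rho(t,\dott)=\bigl[\tilde\rho(t,x(t,\dott))-\tilde\rho(t,x_\ell(t,\dott))\bigr]+\bigl[\tilde\rho(t,x_\ell(t,\dott))-\tilde\rho_\ell(t,x_\ell(t,\dott))\bigr].
\end{equation*}
The first bracket vanishes in $L^1$ by uniform convergence $x_\ell\to x$ together with dominated convergence (using $0\le\tilde\rho\le 1$); the second bracket reduces, via a change of variable whose Jacobian is bounded between $1/K$ and $1$, to $\norm{\tilde\rho-\tilde\rho_\ell}_{L^1}$, which vanishes by the Lagrangian convergence already obtained. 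The boundary strip $[\min\seq{z_{l,\ell},z_l},\max\seq{z_{l,\ell},z_l}]$ contributes a vanishing amount because $z_{l,\ell}\to z_l$ while $\tilde\rho,\rho$ remain bounded by $1$.

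The main obstacle, and the reason the proof is ``short'' only modulo deep input, is the final step: upgrading the limit to an \emph{entropy} solution of the LWR equation \eqref{eq:LWRz}, not merely a weak one. This is precisely the role of Wagner's theorem \cite{Wagner}, which guarantees that a weak entropy solution of $y_t-V(y)_x=0$ is transported by the change of variables $z_x=y$, $z_t=V(y)$ into a weak entropy solution of $\rho_t+(\rho v(\rho))_z=0$. Uniqueness of the entropy solution of \eqref{eq:LWRz} then promotes any subsequential limit to convergence of the whole family and identifies it as the desired LWR solution.
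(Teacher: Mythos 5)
Your proposal follows essentially the same route as the paper: monotonicity of the scheme under the CFL condition feeding into Crandall--Majda for the Lagrangian convergence, Arzel\`a--Ascoli for the discrete Lagrange-to-Euler map, the identical two-term $L^1$ decomposition for $\rho_\ell$, and Wagner's theorem to identify the limit as the entropy solution of \eqref{eq:LWRz}. The only (welcome) additions are minor points of bookkeeping the paper leaves implicit, such as the bounded Jacobian in the change of variables for the second bracket.
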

To illustrate the ideas in this paper we show how the method works in
a concrete example. We have a periodic road in the interval $z\in
[-1,1]$, and choose to position $N$ vehicles in this interval so that 
\begin{equation*}
  \rho_\ell(0,z)\approx \frac12\left(\cos(\pi z)+1\right).
\end{equation*}
In Figure~\ref{fig:1} we show the Lagrangian grid and the
corresponding mapping to Eulerian coordinates for $N=20$, and $t\in
[0,2]$. 
\begin{figure}[h!]
  \centering
  \begin{tabular}{rl}
    \includegraphics[width=0.45\linewidth]{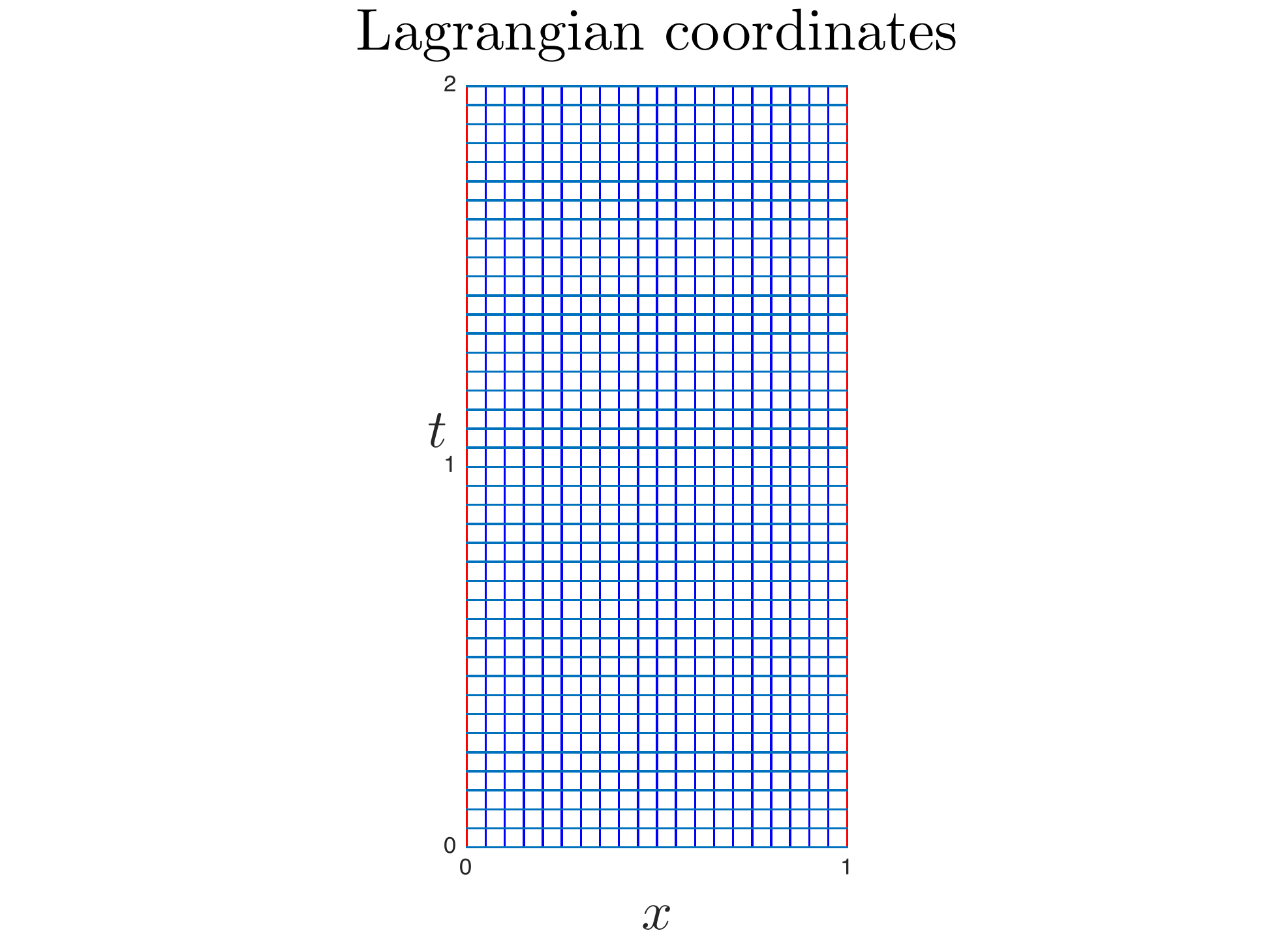}
    &    \includegraphics[width=0.45\linewidth]{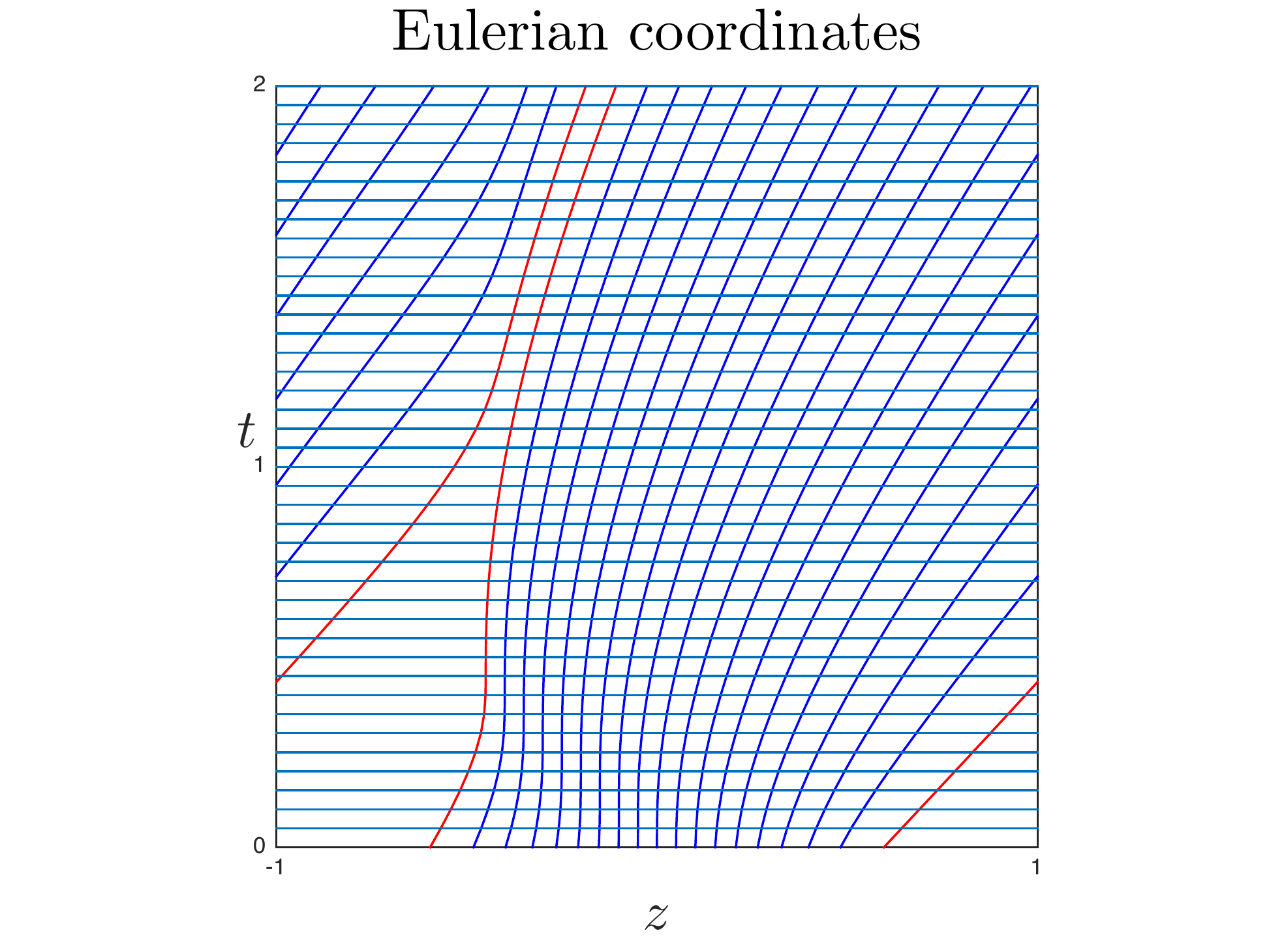}
  \end{tabular}
  \caption{Left: the Lagrangian grid $\seq{(t^n,x_{i-1/2})}_{i=1}^N$.  Right: the Eulerian grid $\seq{(t^n,z^n_{i-1/2})}_{i=1}^N$. In both cases $n=0,\ldots, 40$.}
  \label{fig:1}
\end{figure}
The vertical lines in the Eulerian coordinates are also the paths
followed by the vehicles, and the grid in Eulerian coordinates is the
result of applying the map $z_\ell$ to the rectangular grid depicted
in Lagrangian coordinates on the left. In Figure~\ref{fig:2}, we show
the approximate density $\rho_\ell$ at $t=0$ and $t=2$ in Eulerian
coordinates.
\begin{figure}[h!]
  \centering
  \includegraphics[width=0.5\linewidth]{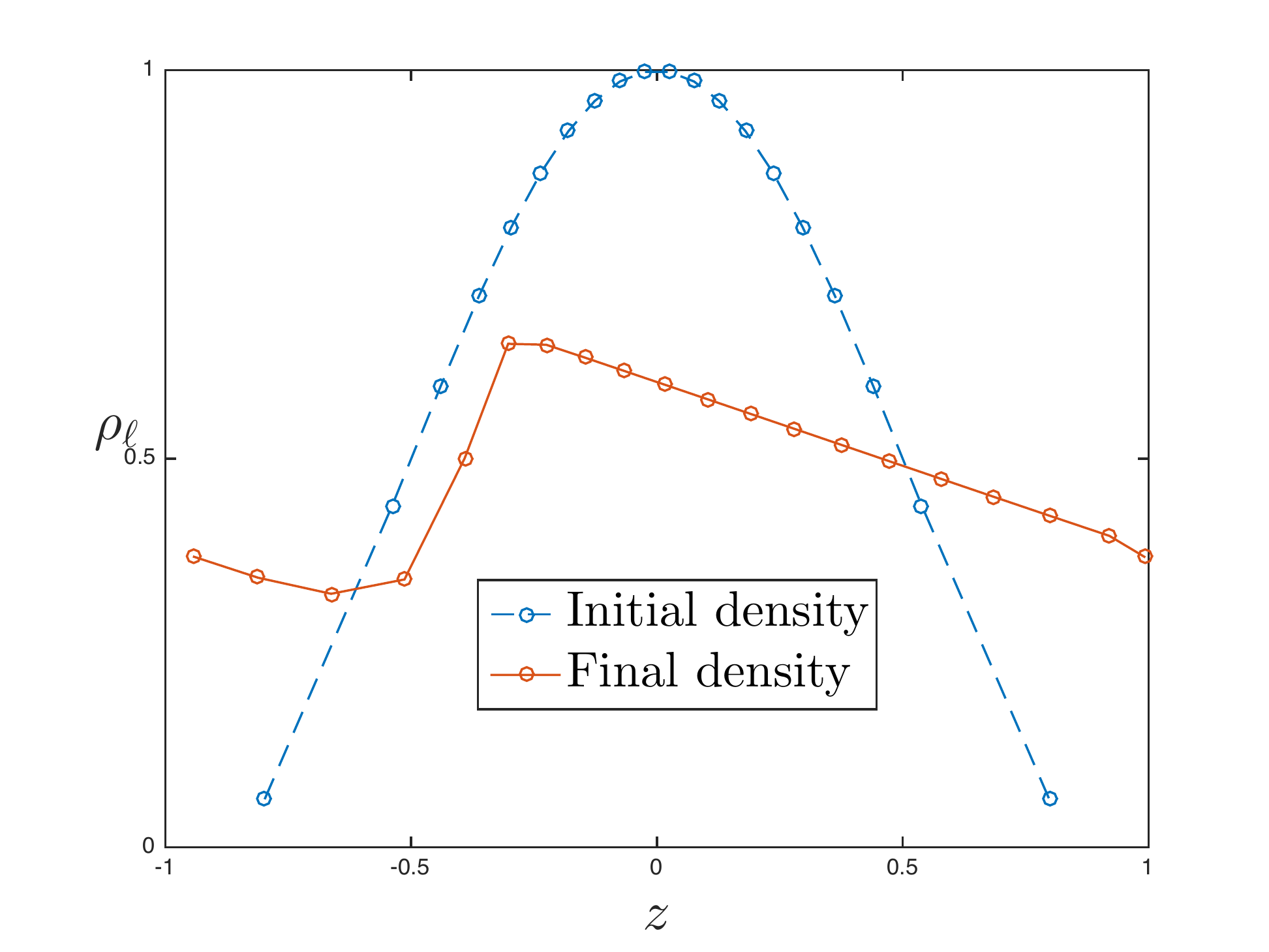}.
  \caption{The approximate density $\rho_\ell$ for $t=0$ and $t=2$ in Eulerian coordinates.}
  \label{fig:2}
\end{figure}
We see that the solution at $t=2$ approximates the ubiquitous ``$N$-wave''.


\end{document}